\newtheorem{theorem}{Theorem}
\newtheorem{lemma}[theorem]{Lemma}
\newtheorem{remark}[theorem]{Remark}
\newenvironment{proof}[1][Proof]{\noindent\textbf{#1.} }{\ \rule{0.5em}{0.5em}}
\begin{document}

\title{The remainder in the Renewal Theorem}
\author{Ron Doney}
\date{}

\begin{abstract}
If the step distribution in a renewal process has finite mean and regularly
varying tail with index $-\alpha ,$ $1<\alpha <2,$ the first two terms in
the asymptotic expansion of the renewal function have been known for many
years. Here we show that, without making any additional assumptions, it is
possible to give, in all cases except for $\alpha =3/2$ $,$ the exact
asymptotic behaviour of the next term. In the case $\alpha =3/2$ the result
is exact to within a slowly varying correction. Similar results are shown to
hold in the random walk case.
\end{abstract}

\maketitle

\section{Introduction and Results}

We consider a renewal process $(S_{n},n\geq 0)$, i.e. a random walk with
non-negative, i.i.d increments $X_{1,}X_{2},\cdots $ with a distribution $F$
whose tail $\overline{F}$ $\in RV(-\alpha )$ (i.e. is regularly varying at
infinity with index $-\alpha $) where $\alpha \in (1,2]$ and we assume $%
\int_{0}^{\infty }$ $y^{2}dF(y)=\infty $ if $\alpha =2.$ We write $EX_{1}=m$
and define a distribution $\Phi $ via its density function%
\begin{equation}
\phi (y)=\frac{P(X_{1}>y)}{m}:=m^{-1}\overline{F}(y),\text{ }y\geq 0,\text{%
and write }\overline{\Phi }(x)=\int_{x}^{\infty }\phi (y)dy.  \label{1}
\end{equation}%
The object of our study is the renewal function $U(x):=U([0,x]),$ where the
renewal measure is defined by 
\begin{equation}
U(dx):=\sum_{0}^{\infty }P(S_{n}\in dx),  \label{4}
\end{equation}%
with $S_{0}\equiv 0.$ Since $\Phi $ is the limiting and stationary
distribution in the process of overshoots in $S,$ its importance is
well-known, and the following result dates from the 70s: see Mohan, \cite{M}%
, who improves earlier results in \cite{T}.%
\begin{equation}
U(x)-m^{-1}x-m^{-1}\int_{0}^{x}\overline{\Phi }(y)dy=o(\int_{0}^{x}\overline{%
\Phi }(y)dy)\text{ as }x\rightarrow \infty .  \label{3}
\end{equation}%
Later Sgibnev showed, in \cite{S2}, that (\ref{3}) actually holds whenever $%
m $ is finite and $EX_{1}^{2}=\infty ,$ so that the assumption of a
regularly varying tail is redundant. This in turn suggests that if we do
make this assumption we should be able to improve on (\ref{3}). Under our
assumptions $\overline{\Phi }\in RV(-\beta ),$ where $\beta =\alpha -1,$ so
any statement that the LHS of (\ref{3}) is $O(x^{\gamma })$ with $\gamma
<1-\beta $ would be an improvement. In fact we can be much more precise than
this.

We write $\phi _{2}$ for the convolution $\phi \ast \phi $ and define
real-valued functions $g$ \ and $\overline{G}$ on $[0,\infty )$ by 
\begin{eqnarray}
g(y) &=&2\phi (y)-\phi _{2}(y),  \label{2} \\
\overline{G}(x) &=&\int_{x}^{\infty }g(z)dz,\text{ so that }\overline{G}%
(0)=\int_{0}^{\infty }g(z)dz=1.  \label{14}
\end{eqnarray}

To state our result, we set%
\begin{equation}
U(x)-m^{-1}x-m^{-1}\int_{0}^{x}\overline{\Phi }(y)dy=m^{-1}V(x),  \label{15}
\end{equation}%
so that the known result (\ref{3}) says that $V(x)=o(\overline{\overline{%
\Phi }}(x)),$ where $\overline{\overline{\Phi }}(x):=\int_{0}^{x}\overline{%
\Phi }(y)dy\in RV(1-\beta ).$

\begin{theorem}
\label{M} Take $\alpha \in (1,2)$ and $\beta =\alpha -1.$

(i) Define a constant by%
\begin{equation*}
c_{\alpha }=(1-2\beta )\int_{0}^{1}\frac{dw}{w^{\beta }(1-w)^{\beta }}=\frac{%
\Gamma (1-\beta )^{2}}{\Gamma (1-2\beta )}.
\end{equation*}%
Then%
\begin{equation*}
\lim_{x\rightarrow \infty }\frac{\overline{G}(x)}{\overline{\Phi }(x)^{2}}%
=c_{\alpha }.
\end{equation*}%
(ii) The asymptotic behaviour of $V$ is given by%
\begin{eqnarray}
V(x) &\backsim &\frac{|c_{\alpha }|x\overline{\Phi }(x)^{2}}{|2\beta -1|}%
\text{ if }\beta \neq 1/2,  \label{m2} \\
\text{ }V(x) &\rightarrow &\int_{0}^{\infty }\overline{G}(y)dy\text{ if }%
\beta =1/2\text{ and}\int_{0}^{\infty }\overline{\Phi }(y)^{2}dy<\infty ,
\label{m4} \\
V(x) &=&o\left( \int_{0}^{x}\overline{\Phi }(y)^{2}dy\right) \text{if }\beta
=1/2\text{ and}\int_{0}^{\infty }\overline{\Phi }(y)^{2}dy=\infty .
\label{m3}
\end{eqnarray}
\end{theorem}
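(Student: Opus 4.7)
The plan is to prove part (i) by a direct scaling argument for regularly varying functions, and then to attack part (ii) by first deriving a convolution equation $\phi\ast V=\overline{\overline{G}}$ from the stationary-overshoot identity and subsequently inverting it via Laplace transforms and a Karamata--Tauberian theorem.

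For part (i), I would interchange the order of integration in $\overline{\Phi}_{2}(x)=\int_{x}^{\infty}\phi_{2}(z)\,dz$ to obtain the identity $\overline{\Phi}_{2}=\overline{\Phi}+\phi\ast\overline{\Phi}$, and hence rewrite
\[
\overline{G}(x)=2\overline{\Phi}(x)-\overline{\Phi}_{2}(x)=\overline{\Phi}(x)^{2}-\int_{0}^{x}\phi(y)\bigl[\overline{\Phi}(x-y)-\overline{\Phi}(x)\bigr]\,dy.
\]
The substitution $y=xw$, combined with the monotone density theorem ($x\phi(xw)\sim\beta\overline{\Phi}(x)w^{-\beta-1}$) and the uniform RV-convergence of $\overline{\Phi}(x(1-w))/\overline{\Phi}(x)$ to $(1-w)^{-\beta}$ on compact subsets of $(0,1)$, shows that the rescaled integrand converges to $\beta w^{-\beta-1}[(1-w)^{-\beta}-1]$; this limit is integrable on $(0,1)$ because the cancellation $(1-w)^{-\beta}-1\sim\beta w$ tames the singularity at $w=0$, and because $\beta<1$ at $w=1$. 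Dominated convergence plus a short Beta-function manipulation then identifies $\lim\overline{G}(x)/\overline{\Phi}(x)^{2}$ with $(1-2\beta)B(1-\beta,1-\beta)=c_{\alpha}$.

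For part (ii), the classical identity $(\phi\ast U)(x)=x/m$ (i.e.\ the delayed renewal measure with initial step $\sim\Phi$ has Lebesgue density $1/m$ on $(0,\infty)$), together with the by-parts identities $\int_{0}^{x}\phi(y)(x-y)\,dy=x-\overline{\overline{\Phi}}(x)$ and $\int_{0}^{x}\phi(y)\overline{\overline{\Phi}}(x-y)\,dy=\overline{\overline{\Phi}}(x)-(\overline{\Phi}\ast\overline{\Phi})(x)$, reduces the defining relation $mU=x+\overline{\overline{\Phi}}+V$ to
\[
(\phi\ast V)(x)=(\overline{\Phi}\ast\overline{\Phi})(x)=\overline{\overline{G}}(x),
\]
the last equality coming from integrating $\overline{G}=\overline{\Phi}-\phi\ast\overline{\Phi}$. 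Taking Laplace transforms and exploiting $1-\widehat{\phi}(s)=s\widehat{\overline{\Phi}}(s)\sim\Gamma(1-\beta)L(1/s)s^{\beta}$ (Karamata Abelian with $L(x):=\overline{\Phi}(x)x^{\beta}$), I obtain $\widehat{V}(s)=(1-\widehat{\phi}(s))^{2}/[s^{2}\widehat{\phi}(s)]\sim\Gamma(1-\beta)^{2}L(1/s)^{2}s^{2\beta-2}$ as $s\downarrow 0$. For $\beta\neq 1/2$ the Karamata--Tauberian theorem yields $V(x)\sim\Gamma(1-\beta)^{2}L(x)^{2}x^{1-2\beta}/\Gamma(2-2\beta)$, which via the reflection $\Gamma(2-2\beta)=(1-2\beta)\Gamma(1-2\beta)$ matches the stated $|c_{\alpha}|x\overline{\Phi}(x)^{2}/|2\beta-1|$. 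At the borderline $\beta=1/2$ one has $c_{\alpha}=0$ and $\widehat{V}(s)\sim\pi L(1/s)^{2}/s$; the dichotomy turns on whether $L^{2}$ is $dy/y$-integrable, equivalently whether $\int_{0}^{\infty}\overline{\Phi}^{2}<\infty$. In the integrable case, $\overline{\overline{G}}(x)\to\int_{0}^{\infty}\overline{G}$ and the convolution equation transfers this finite limit to $V$; otherwise only the weaker bound $V(x)=o\bigl(\int_{0}^{x}\overline{\Phi}^{2}\bigr)$ is available.

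The principal obstacle is the Tauberian inversion, since $V$ is not itself monotone. I would resolve it by writing $V=mU-(x+\overline{\overline{\Phi}})$ as a difference of two nondecreasing functions and applying the Tauberian theorem to each, using the expansion $1/\widehat{\phi}(s)=\sum_{n\geq 0}(1-\widehat{\phi}(s))^{n}$ in $\widehat{mU}(s)=1/(s^{2}\widehat{\phi}(s))$: the $n=0,1$ terms reproduce $x$ and $\overline{\overline{\Phi}}$ exactly, the $n=2$ term inverts to $\overline{\overline{G}}$ and furnishes the claimed leading asymptotic, and the remainder $\sum_{n\geq 3}$ can be controlled via the convolution equation $\phi\ast V=\overline{\overline{G}}$ itself, which forces higher-order terms to be of strictly smaller order. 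The $\beta=1/2$ sub-cases require additional delicate slowly-varying analysis to separate the limit and the $o(\int\overline{\Phi}^{2})$ regimes.
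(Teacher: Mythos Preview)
Your treatment of part (i) is sound in outline. It is organized differently from the paper's proof---the paper splits the inner convolution $\phi_2(y)$ symmetrically at $y/2$ and handles the two halves by separate uniform-convergence arguments, whereas you work from the cleaner identity $\overline G=\overline\Phi^{\,2}-\int_0^x\phi(y)\bigl[\overline\Phi(x-y)-\overline\Phi(x)\bigr]\,dy$---but both routes lead to the same Beta integral. Your ``dominated convergence'' needs Potter bounds to control the region where $xw$ is small and where $x(1-w)$ is small, but that is routine.

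The genuine gap is in part (ii). Your Laplace identity $\widehat V(s)=(1-\widehat\phi(s))^2/(s^2\widehat\phi(s))$ and its asymptotic $\widehat V(s)\sim\Gamma(1-\beta)^2L(1/s)^2s^{2\beta-2}$ are correct, but the Tauberian inversion cannot be carried out as you propose. Writing $V=mU-(x+\overline{\overline\Phi})$ as a difference of nondecreasing functions and applying Karamata--Tauber to each piece gives only the \emph{leading} asymptotic of each, and these are both $\sim x$; subtracting yields nothing better than $V(x)=o(x)$. The expansion $1/\widehat\phi=\sum_{n\ge0}(1-\widehat\phi)^n$ is purely formal, and inverting it term by term is exactly the step that requires justification; appealing back to $\phi\ast V=\overline{\overline G}$ to bound the tail $\sum_{n\ge3}$ is circular, since that convolution equation already encodes the full series. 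Indeed the paper discusses this very expansion in its concluding remarks as a heuristic whose rigorous justification is open.

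The paper avoids Tauberian inversion altogether. From $\int_0^x g(x-y)U(y)\,dy=m^{-1}\bigl(x+\overline{\overline\Phi}(x)\bigr)$ it obtains, after integration by parts, the renewal representation
\[
V(x)=m\int_{[0,x)}\overline G(x-y)\,U(dy),
\]
and then reads off the asymptotics directly in the $x$-domain. For $\beta<1/2$ (so $c_\alpha>0$) one sandwiches $\overline G$ between multiples of $Q=\overline\Phi^{\,2}$ and applies a renewal lemma of Sgibnev: for nonnegative, nonincreasing, bounded $Q$ with $\int_0^\infty Q=\infty$ one has $\int_0^x Q(x-y)\,dU(y)\sim m^{-1}\int_0^x Q$. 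The case $\beta=1/2$ follows the same pattern with $c_\alpha=0$, together with the Key Renewal Theorem when $\int_0^\infty\overline\Phi^{\,2}<\infty$. For $\beta>1/2$ one has $c_\alpha<0$ and $\int_0^\infty\overline G=0$; here the paper first uses Fatou's lemma on $g^*=-g$ to show $-\overline G$ is eventually positive and monotone, and then compares $dU$ with $m^{-1}\,dy$ on a tail interval to extract the asymptotic. This case-splitting and the eventual-sign argument for $g$ are invisible on the Laplace side and are precisely what your approach is missing.
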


\begin{remark}
Since $\overline{\Phi }(x)^{2}\in RV(-1)$ when $\beta =1/2$ we see that in (%
\ref{m3}) $\int_{0}^{x}\overline{\Phi }(y)^{2}dy$ is slowly varying. Also in
(\ref{m4}) $\int_{0}^{\infty }\overline{G}(y)dy=0$ iff%
\begin{equation}
\frac{1-\hat{\phi}(\lambda )}{\sqrt{\lambda }}=\frac{\int_{0}^{\infty
}(1-e^{-\lambda x})\overline{F}(x)}{m\sqrt{\lambda }}\rightarrow 0\text{ as }%
\lambda \downarrow 0.  \label{m5}
\end{equation}
\end{remark}

\begin{remark}
We cannot give the exact behaviour of $V$ when $\alpha =2,$ but it is not
difficult to show that in this case $V(x)=o(x^{\varepsilon -1})$ for any
fixed $\varepsilon >0.$
\end{remark}

\section{Proofs}

(i) Recall that $\phi (x)=m^{-1}\overline{F}(x)$ is decreasing, bounded and
is in $RV(-\alpha ).$ Then write 
\begin{eqnarray*}
\overline{G}(x) &=&\int_{x}^{\infty }(2\phi (y)-\int_{0}^{y}\phi (y-w)\phi
(w)dw)dy \\
&=&\int_{x}^{\infty }(2\phi (y)\int_{0}^{y/2}\phi (w)dw-2\int_{0}^{y/2}\phi
(y-w)\phi (w)dw)dy+2\int_{x}^{\infty }\phi (y)\overline{\Phi }(y/2)dy. \\
&:&=I_{1}+I_{2}.
\end{eqnarray*}%
Since $\overline{\Phi }(y/2)\backsim 2^{\beta }\overline{\Phi }(y),$ we see
that $I_{2}$ $\backsim 2^{\beta }\overline{\Phi }(x)^{2}.$ Also 
\begin{eqnarray*}
-I_{1} &=&2\int_{x}^{\infty }dy\int_{0}^{y/2}(\phi (y-w)-\phi (y))\phi (w)dw
\\
&=&2\int_{0}^{\infty }\phi (w)dw\int_{2w\vee x}^{\infty }(\phi (y-w)-\phi
(y))dy \\
&=&2\int_{0}^{x/2}\phi (w)(\overline{\Phi }(x-w)-\overline{\Phi }%
(x))dw+2\int_{x/2}^{\infty }\phi (w)(\overline{\Phi }(w)-\overline{\Phi }%
(2w))dw.
\end{eqnarray*}%
As $\overline{\Phi }(w)-\overline{\Phi }(2w)\backsim (1-2^{-\beta })%
\overline{\Phi }(w)$ we see that the second term is asymptotic to $%
(1-2^{-\beta })\overline{\Phi }(x/2)^{2},$ or equivalently $2^{\beta
}(2^{\beta }-1)\overline{\Phi }(x)^{2}.$ Also we can write the first term as 
\begin{equation*}
2\int_{0}^{x/2}\phi (w)dw\int_{x-w}^{x}\phi (y)dy=2(x\phi
(x))^{2}\int_{0}^{1/2}\frac{\phi (xw)}{\phi (x)}dw\int_{1-w}^{1}\frac{\phi
(xy)}{\phi (x)}dy.
\end{equation*}%
For $y\in (1-w,1]$ and $w\in (0,1/2],$ we have%
\begin{equation*}
\frac{\phi (xw)}{\phi (x)}\frac{\phi (xy)}{\phi (x)}\overset{\text{uniformly}%
}{\rightarrow }w^{-\alpha }y^{-a},
\end{equation*}%
and $x\phi (x)\backsim \beta \overline{\Phi }(x),$ so this is asymptotic to $%
2(\beta \overline{\Phi }(x))^{2}J_{\alpha },$ where%
\begin{eqnarray*}
J_{\alpha } &=&\int_{0}^{1/2}w^{-\alpha }dw\int_{1-w}^{1}y^{-a}dy=\beta
^{-1}\int_{0}^{1/2}w^{-a}\{(1-w)^{-\beta }-1\}dw \\
&=&-\beta ^{-2}2^{\beta }(2^{\beta }-1)+\beta ^{-1}\int_{0}^{1/2}w^{-\beta
}(1-w)^{-\alpha }dw.
\end{eqnarray*}

This establishes the result, and gives%
\begin{equation}
c_{\alpha }=2^{2\beta }-2\beta I_{\alpha },\text{ where }I_{\alpha
}=\int_{0}^{1/2}\frac{dw}{(1-w)\{w(1-w)\}^{\beta }}.
\end{equation}%
But 
\begin{eqnarray*}
I_{\alpha } &=&\int_{0}^{1/2}\frac{w+(1-w)dw}{(1-w)\{w(1-w)\}^{\beta }}%
=\int_{0}^{1/2}\frac{w^{1-\beta }dw}{(1-w)^{1+\beta }}+\int_{0}^{1/2}\frac{dw%
}{w^{\beta }(1-w)^{\beta }} \\
&=&\beta ^{-1}2^{2\beta -1}+\beta ^{-1}(\beta -1)\int_{0}^{1/2}\frac{%
w^{1-\beta }dw}{(1-w)^{\beta }}+\int_{0}^{1/2}\frac{dw}{w^{\beta
}(1-w)^{\beta }} \\
&=&\beta ^{-1}2^{2\beta -1}+(1-(2\beta )^{-1})\int_{0}^{1}\frac{dw}{w^{\beta
}(1-w)^{\beta }},\text{ }
\end{eqnarray*}%
so $c_{\alpha }=(1-2\beta )B(1-\beta ,1-\beta )$as required.

(ii) We start by noting that the stationarity of $\phi $ gives $%
\int_{0}^{x}\phi (x-y)U(y)dy=m^{-1}x,$ and then 
\begin{eqnarray*}
\int_{0}^{x}\phi _{2}(x-y)U(y)dy &=&\int_{0}^{x}\int_{0}^{x-y}\phi
(x-y-z)\phi (z)dzU(y)dy \\
&=&\int_{0}^{x}\phi (z)dz\int_{0}^{x-z}\phi (x-y-z)U(y)dy \\
&=&m^{-1}\int_{0}^{x}(x-z)\phi (z)dz=m^{-1}(x-\int_{0}^{x}\overline{\Phi }%
(y)dy).
\end{eqnarray*}%
Thus 
\begin{equation*}
\int_{0}^{x}g(x-y)U(y)dy=m^{-1}(x+\int_{0}^{x}\overline{\Phi }(y)dy),
\end{equation*}%
and 
\begin{equation}
m^{-1}V(x)=U(x)-m^{-1}(x+\int_{0}^{x}\overline{\Phi }(y)dy)=U(x)-%
\int_{0}^{x}g(x-y)U(y)dy,  \label{5}
\end{equation}%
and integration by parts gives%
\begin{equation}
V(x)=m\int_{[0,x)}\overline{G}(x-y)U(dy).  \label{b}
\end{equation}

Although statement (\ref{m2}) unifies the cases $\beta \in (0,1/2)$ and $%
\beta \in (1/2,1)$ their proofs differ. In the first case $\int_{0}^{x}%
\overline{\Phi }^{2}(y)dy\rightarrow \infty ,$ and we can use (\ref{b}) in
conjunction with the following, which is Theorem 4 in \cite{S2}, and \ shows
that Theorem 2.1 in \cite{M} holds without assuming asymptotic stability.

\begin{lemma}
(Sgibnev)\label{C} Let $Q$ be a non-negative, non-increasing bounded
function and put $A(x)=\int_{0}^{x}Q(y)dy.$ Then if $A(\infty )=\infty .$ 
\begin{equation}
\int_{0}^{x}Q(x-y)dU(y)\backsim m^{-1}A(x)\text{ as }x\rightarrow \infty .
\label{13}
\end{equation}
\end{lemma}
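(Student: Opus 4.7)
The plan is to sandwich the integral $\int_0^x Q(x-y)\,dU(y)$ between upper and lower Riemann-type sums coming from step-function approximations to $Q$, and then evaluate these sums by combining Blackwell's theorem for intervals far from the origin with a uniform estimate on $U$ of short intervals that handles the remaining terms. First I would change variables $u = x - y$ and, for a fixed mesh $h > 0$, set $Q_h^+(u) = Q(kh)$ and $Q_h^-(u) = Q((k+1)h)$ for $u \in [kh,(k+1)h)$; monotonicity of $Q$ gives $Q_h^- \le Q \le Q_h^+$ and hence the bracketing
\[
\sum_{k} Q((k+1)h)\,U\bigl([x-(k+1)h,\,x-kh)\bigr) \;\le\; \int_0^x Q(x-y)\,dU(y) \;\le\; \sum_{k} Q(kh)\,U\bigl([x-(k+1)h,\,x-kh)\bigr),
\]
with the sums running over $0 \le k \le \lfloor x/h\rfloor$.

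Next, fix $\varepsilon > 0$ and let $a_0 = a_0(\varepsilon, h)$ be the Blackwell threshold for which $|U([a,a+h))/h - m^{-1}| < \varepsilon$ whenever $a \ge a_0$. I would split the sum at $k^* = \lfloor (x - a_0 - h)/h\rfloor$. On the \emph{good} range $k \le k^*$, every weight $U([x-(k+1)h, x-kh))$ lies within $\varepsilon h$ of $m^{-1} h$, so the good part of the upper sum is at most $(m^{-1}+\varepsilon)\,h\sum_{k=0}^{k^*} Q(kh)$, and by monotonicity of $Q$ this Riemann sum is itself within $hQ(0)$ of $A(x)$. On the \emph{bad} range $k^* < k \le \lfloor x/h\rfloor$ there are only $O(1)$ terms (the number is bounded by $(a_0+h)/h + 1$, independent of $x$), each contributing at most $Q(0)\cdot U([0,h))$, by the uniform estimate $U([a,a+h)) \le U([0,h))$ valid for every $a \ge 0$. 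This last bound I would prove by strong Markov at the first passage time $T$ into $[a,\infty)$: writing $V = S_T - a$ for the overshoot, the contribution to $U([a,a+h))$ from indices $n \ge T$ equals $E[\mathbf{1}_{V<h}\,U([0,h-V))]$, which is bounded by $U([0,h))$; for $n < T$ the contribution vanishes since $S_n < a$. Because $A(x) \to \infty$, dividing through by $A(x)$ and sending $\varepsilon \downarrow 0$ gives $\limsup \int_0^x Q(x-y)\,dU(y)/A(x) \le m^{-1}$, and the analogous argument applied to the lower step-function $Q_h^-$ yields the matching $\liminf$.

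The main obstacle is the interplay between the pointwise nature of Blackwell's theorem and the cumulative mass of $U$ on the many short intervals arising in the Riemann sum: for the finitely many bad intervals near the origin, Blackwell's asymptotic is unavailable, and without a uniform (in $a$) upper bound on $U([a,a+h))$ the truncation at $k^*$ would leave an uncontrolled remainder. The overshoot-based bound is precisely what controls the bad contribution by a constant, which is then absorbed into the $o(A(x))$ error. A minor subsidiary concern is that $Q$ need not vanish at infinity, so $A(x)$ may grow only linearly (when $Q(\infty) > 0$); however, the sandwich argument uses only monotonicity of $Q$ together with $A(x) \to \infty$, and no rate of decay of $Q$ is needed, so this regime requires no separate treatment.
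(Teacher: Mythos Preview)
The paper does not supply its own proof of this lemma; it is quoted from Sgibnev \cite{S2} (Theorem~4 there) and used as a black box. Your argument is correct and is precisely the classical Blackwell-based sandwich used to prove the Key Renewal Theorem, with the role of direct Riemann integrability of $Q$ taken over by the hypothesis $A(\infty)=\infty$: the divergence of $A(x)$ swallows both the bounded Riemann-sum defect $O(hQ(0))$ and the contribution of the finitely many ``bad'' intervals near the origin controlled by the subadditivity bound $U([a,a+h))\le U([0,h))$, so no mesh refinement $h\to 0$ is ever required. One small caveat worth noting: Blackwell's theorem in the form you invoke it is the non-lattice statement; in the lattice case you should choose $h$ to be a multiple of the span, after which the argument goes through unchanged.
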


If $\beta \in (0,1/2)$ we have $c_{\alpha }>0,$ so given $\varepsilon >0$ $%
\exists x_{0}$ such that for all $x>x_{0}$ 
\begin{equation}
(c_{\alpha }-\varepsilon )Q(x)\leq \overline{G}(x)\leq (c_{\alpha
}+\varepsilon )Q(x),  \label{n}
\end{equation}%
where $Q(x)$ $=\overline{\Phi }^{2}(x)$ satisfies the conditions of Lemma %
\ref{C}. Since the contribution to the integral in (\ref{b}) from $[0,x_{0}]$
is $O(\overline{G}(x)),$ which is neglible, it follows that 
\begin{equation*}
m^{-1}V(x)\thicksim m^{-1}\int_{0}^{x}Q(y)dy,\text{ so }V(x)\thicksim \frac{%
c_{\alpha }x\overline{\Phi }^{2}(x)}{1-2\beta },
\end{equation*}%
and (\ref{m2}) holds. If $\beta =1/2$ we have $c_{a}=0$ but (\ref{n}) still
holds and provided $\int_{0}^{\infty }Q(y)dy=\infty $ the conditions of
Lemma \ref{C} are satisfied and the proof of (\ref{m3}) follows. In the
remaining cases it is clear that $\overline{G}$ is Directly Riemann
Integrable, so the Key Renewal Theorem applies to (\ref{b}) to give $%
V(x)\rightarrow \int_{0}^{\infty }\overline{G}(y)dy,$ and we need only show
when this is $0.$ From (\ref{2}) we see that the ordinary Laplace transforms
of $\phi $ and $g$ are related by%
\begin{equation*}
1-\hat{g}(\lambda )=(1-\hat{\phi}(\lambda ))^{2}\thicksim \lambda ^{2\beta
}L(\lambda )\text{ as }\lambda \rightarrow 0,
\end{equation*}%
where $L$ is slowly varying at zero, so we have $(1-\hat{g}(\lambda
))/\lambda \rightarrow 0$ as $\lambda \rightarrow 0$ iff $\beta >1/2$ or $%
\beta =1/2$ and (\ref{m5}) holds. But since $g$ is bounded in absolute value
by the integrable function $2\phi +\phi _{2},$ we can interchange orders of
integration to see that 
\begin{equation*}
(1-\hat{g}(\lambda ))/\lambda =\int_{0}^{\infty }e^{-\lambda x}\overline{G}%
(x)dx,\text{ }
\end{equation*}%
and the conclusion follows by letting $\lambda $ go to $0.$

For the case $\beta \in (1/2,1)$ we write $g^{\ast },\overline{G^{\ast }}$
for $-g,-\overline{G},$ and we claim first that $\overline{G^{\ast }}$ is
eventually positive and monotone, which follows from the fact%
\begin{equation}
\lim \inf_{x\in \infty }\frac{g^{\ast }(x)}{2x\phi (x)^{2}}\geq \frac{%
-c_{\alpha }}{\beta }>0.  \label{6}
\end{equation}%
To see that (\ref{6}) holds, write%
\begin{eqnarray*}
g^{\ast }(x) &=&2\left( \int_{0}^{x/2}\phi (w)\{\phi (x-w)-\phi (x)\}dw-\phi
(x)\overline{\Phi }(x/2)\right) \\
&=&2x\phi (x)^{2}\left( \int_{0}^{1/2}\frac{\phi (w)}{\phi (x)}\{\frac{\phi
(x-xw)}{\phi (x)}-1\}dw-\frac{\overline{\Phi }(x/2)}{x\phi (x)}\right) .
\end{eqnarray*}%
Since the integrand converges pointwise to $w^{-\alpha }\{(1-w)^{-\alpha
}-1\}$ it follows from Fatou's Lemma that 
\begin{eqnarray*}
\lim \inf_{x\in \infty }\frac{g^{\ast }(x)}{2x\phi (x)^{2}} &\geq
&\int_{0}^{1/2}w^{-\alpha }\{(1-w)^{-\alpha }-1\}dw-\beta ^{-1}2^{\beta } \\
&=&I_{\alpha }+\beta J_{\alpha }-\beta ^{-1}2^{\beta }=\frac{-c_{\alpha }}{%
\beta },
\end{eqnarray*}%
as claimed. So we can fix $x_{0}$ so that $g^{\ast }(x)>0$ for $x>x_{0},$
and then, as in the above referenced proof in \cite{S2}, given any $%
\varepsilon >0$ we can find $x_{1}>x_{0}$ with%
\begin{eqnarray*}
\int_{x_{1}}^{x}\overline{G^{\ast }}(x-y)dU(y) &\leq &\frac{1+\varepsilon }{m%
}\int_{x_{1}}^{x}\overline{G^{\ast }}(x-y)dy \\
&=&\frac{1+\varepsilon }{m}\int_{x-x_{1}}^{\infty }\overline{G}(z)dz\backsim 
\frac{1+\varepsilon }{m}\frac{c_{\alpha }x\overline{\Phi }(x)^{2}}{2\beta -1}%
,
\end{eqnarray*}%
where we have used $\int_{0}^{\infty }\overline{G}(z)dz=0,$ and $%
\int_{x-x_{1}}^{x}\overline{G}(z)dz=O(\overline{\Phi }(x)^{2}).$ Using a
corresponding lower bound and the fact that $\int_{[0,x_{1})}\overline{%
G^{\ast }}(x-y)dU(y)=O(\overline{\Phi }(x)^{2}),$ (\ref{m2}) follows.

\section{The Random walk case}

If the variables $X_{1},X_{2},\cdots $ can take positive and negative
values, we will still define the renewal measure by (\ref{4}), and study $%
U(x)=U([0,x])$ as $x\rightarrow \infty .$ (For a different interpretation of
the renewal function see \cite{S1}.) In this case it is also shown in \cite%
{S2} that (\ref{3}) holds only assuming $m=EX_{1}\in (0,\infty )$ and $%
E(X_{1}^{+})^{2}=\infty .$ The idea of that proof is to express $U$ in terms
of $U^{\uparrow },$ and $U^{\downarrow },$ the renewal measures for the
process of increasing and decreasing ladder heights, and then use (\ref{3})
for $U^{\uparrow }.$ We will use a similar argument to give an extension of
(ii) of our Theorem \ref{M} to the random walk case.

To clarify, if $\tau _{n}$is the $n^{\text{th}}$ strict increasing ladder
epoch and $\sigma _{n}$ is the $n^{\text{th}}$ weak decreasing ladder epoch
(with $\tau _{0}=\sigma _{0}=0)$, we put%
\begin{eqnarray*}
U^{\uparrow }(dx) &=&\sum_{0}^{\infty }P(H_{n}^{\uparrow }\in dx),\text{
where }H_{n}^{\uparrow }=S_{\tau _{n}}\text{ and } \\
U^{\downarrow }(dx) &=&\sum_{0}^{\infty }P(H_{n}^{\downarrow }\in dx),\text{
where }H_{n}^{\downarrow }=|S_{\sigma _{n}}|.\text{ }
\end{eqnarray*}
Since $m>0$ we know that $H_{1}^{\downarrow }$ is improper and $%
U^{\downarrow \text{ }}$ is a finite measure. Everything depends on the
following simple observation:

\begin{lemma}
We have%
\begin{equation}
U(dx)=\int_{0}^{\infty }U^{\downarrow \text{ }}(dy)U^{\uparrow }(y+dx),\text{
}x>0,  \label{19}
\end{equation}
\end{lemma}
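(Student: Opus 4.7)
The plan is to derive the identity by splitting each trajectory of the random walk at its most recent weak descending ladder epoch, and then using a time-reversal (duality) argument to identify the "post-ladder" contribution with the ascending ladder renewal measure.

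First, I would write, for $x>0$,
\[
U(dx)=\sum_{n\geq 0}P(S_{n}\in dx)=\sum_{k\geq 0}\sum_{n\geq 0}P(S_{n}\in dx,\,\sigma_{k}\leq n<\sigma_{k+1}),
\]
since for each $n$ there is a unique $k=\rho_n\geq 0$ with $\sigma_{k}\leq n<\sigma_{k+1}$ (and for $\sigma_k=\infty$ the corresponding summand is null). Conditioning on $(\sigma_{k},S_{\sigma_{k}})$ and applying the strong Markov property at $\sigma_{k}$, the post-$\sigma_{k}$ increments form an independent copy $(S'_{m})_{m\geq 0}$ of the walk starting at $0$, with $\sigma_{k+1}-\sigma_{k}$ equal to its first weak descending ladder epoch $\sigma'_{1}$. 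On $\{-S_{\sigma_{k}}\in dy\}$ the event $\{S_{n}\in dx\}$ becomes $\{S'_{n-\sigma_k}\in y+dx\}$, so
\[
\sum_{n\geq 0}P(S_{n}\in dx,\,\sigma_{k}\leq n<\sigma_{k+1})
=\int_{0}^{\infty}P(H_{k}^{\downarrow}\in dy)\,V(y+dx),
\]
where $V(dz):=\sum_{m\geq 0}P(S'_{m}\in dz,\,m<\sigma'_{1})$ is the pre-$\sigma'_{1}$ occupation measure on $[0,\infty)$.

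The key step is then to identify $V=U^{\uparrow}$. For this I would use the standard cyclic duality: for any fixed $n$, the reversed increments $(X_{n},X_{n-1},\ldots,X_{1})$ have the same joint law as $(X_{1},\ldots,X_{n})$, and under this reversal the event $\{S_{m}>0,\,1\leq m\leq n\}=\{\sigma_{1}>n\}$ transforms into $\{\tilde S_{n}>\tilde S_{j},\,0\leq j<n\}$, i.e.\ the event that $n$ is a strict ascending ladder epoch of the reversed walk, while the endpoint $S_{n}=\tilde S_{n}$ is preserved. Summing the resulting equality $P(S_{n}\in A,\sigma_{1}>n)=P(\tilde S_{n}\in A,\,n\text{ is a s.a.l.e.})$ over $n\geq 0$ yields $V(A)=\sum_{k}P(H_{k}^{\uparrow}\in A)=U^{\uparrow}(A)$.

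Combining these two ingredients and summing over $k$,
\[
U(dx)=\sum_{k\geq 0}\int_{0}^{\infty}P(H_{k}^{\downarrow}\in dy)\,U^{\uparrow}(y+dx)=\int_{0}^{\infty}U^{\downarrow}(dy)\,U^{\uparrow}(y+dx),
\]
which is (\ref{19}). The main subtlety will be handling the strict/weak distinction correctly: one must pair the \emph{weak} descending ladder (so that the inequalities $S_{\sigma_k}\leq S_{\sigma_{k-1}}$ and $S_j>S_{\sigma_k}$ for $\sigma_k<j<\sigma_{k+1}$ match up after reversal with the \emph{strict} ascending ladder), and one has to track the atoms at $0$ (from $H_{0}^{\uparrow}=H_{0}^{\downarrow}=0$) to verify that the identity also holds in the distributional sense near the boundary; everything else is bookkeeping via the strong Markov property.
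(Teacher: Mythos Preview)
Your argument is correct, but it follows a genuinely different route from the paper. The paper's proof is a one-liner via Fourier transforms: since the transforms of $U$, $U^{\uparrow}$ and $U^{\downarrow}$ are $(1-E e^{i\theta S_1})^{-1}$, $(1-E e^{i\theta H_1^{\uparrow}})^{-1}$ and $(1-E e^{i\theta H_1^{\downarrow}})^{-1}$ respectively, the identity is an immediate consequence of the Wiener--Hopf factorisation $1-E e^{i\theta S_1}=(1-E e^{i\theta H_1^{\uparrow}})(1-E e^{-i\theta H_1^{\downarrow}})$. Your approach is instead a direct probabilistic one: you split each path at its last weak descending ladder epoch, apply the strong Markov property, and then invoke time-reversal duality to identify the pre-$\sigma_1$ occupation measure with $U^{\uparrow}$. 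This is essentially one of the classical pathwise derivations \emph{of} the Wiener--Hopf factorisation, so the two proofs are closely related in spirit; yours is more self-contained and makes the strict/weak pairing explicit, while the paper's is shorter but presupposes the factorisation as a black box.
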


\begin{proof}
Since the Fourier transforms of the measures $U,U^{\uparrow }$ and $%
U^{\downarrow \text{ }}$ are $(1-E(e^{i\theta S_{1}}))^{-1},(1-E(e^{i\theta
H_{1}^{\uparrow }}))^{-1},$ and $(1-E(e^{i\theta H_{1}^{\downarrow
}}))^{-1}, $ This is immediate from the Wiener-Hopf factorisation.
\end{proof}

\begin{remark}
This paraphrases the Lemma on p 790 of \cite{S2}.
\end{remark}

A further consequence of the Wiener-Hopf factorisation is that%
\begin{equation*}
C:=\int_{0}^{\infty }U^{\downarrow \text{ }}(dy)=\frac{m^{\uparrow }}{m},%
\text{ where }m^{\uparrow }=EH_{1}^{\uparrow }.
\end{equation*}%
Moreover the duality lemma gives, as $z\rightarrow \infty $ 
\begin{eqnarray}
\overline{\Phi ^{\uparrow }}(z) &:&=\frac{1}{m^{\uparrow }}\int_{z}^{\infty
}P(H_{1}^{\uparrow }>w)dw=\frac{1}{m^{\uparrow }}\int_{0}^{\infty
}U^{\downarrow \text{ }}(dy)\int_{z}^{\infty }P(S_{1}>w)dw  \notag \\
&\backsim &\frac{C\int_{z}^{\infty }P(S_{1}>w)dw}{m^{\uparrow }}=\overline{%
\Phi }(z).  \label{17}
\end{eqnarray}

\begin{remark}
Actually what is shown in \cite{S2} is that 
\begin{equation}
U(x)-m^{-1}x\backsim m^{-1}\int_{0}^{x}\overline{\Phi ^{\uparrow }}(y)dy,%
\text{ }  \label{18}
\end{equation}%
and then a version of (\ref{17}) is used to obtain (\ref{3}). But in
examining the remainder it is important that we use (\ref{18}).
\end{remark}

Our extension of Theorem \ref{M} is

\begin{theorem}
\label{N}Assume that $ES_{1}=m\in (0,\infty )$ and $\overline{F}\in
RV(-\alpha )$ with $\alpha \in (1,2).$ Write$\overline{\text{ }\Phi
^{\uparrow }}$ and $\overline{G^{\uparrow }}$ for the functions$\overline{%
\text{ }\Phi }$ and $\overline{G}$ evaluated for the renewal process $%
(H_{n}^{\uparrow },n\geq 0),$ and set%
\begin{eqnarray*}
\Psi (x) &=&\frac{1}{m^{\uparrow }}\int_{0}^{\infty }U^{\downarrow
}(dy)\int_{y}^{x+y}\overline{\text{ }\Phi ^{\uparrow }}(z)dz-K,\text{ where}
\\
K &=&0\text{ if }\int_{0}^{\infty }\overline{\Phi }(y)^{2}dy=\infty ,\text{ }%
K=\int_{0}^{\infty }U^{\downarrow }(dy)V^{\uparrow }(y)\text{ if }%
\int_{0}^{\infty }\overline{\Phi }(y)^{2}dy<\infty .
\end{eqnarray*}%
and%
\begin{equation*}
m^{-1}\tilde{V}(x)=U(x)-\frac{x}{m}-\Psi (x),
\end{equation*}%
Then we have that the statements (\ref{m2}), (\ref{m4}) and (\ref{m3}) of
Theorem \ref{M} hold with $V$ replaced by $\tilde{V}$ .
\end{theorem}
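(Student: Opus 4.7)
The plan is to lift Theorem~\ref{M} from the strictly ascending ladder-height renewal process $(H_n^{\uparrow})$ to the random walk via the Wiener--Hopf convolution identity (\ref{19}). Integrating (\ref{19}) and accounting for the unit atoms of $U^{\downarrow}$ and $U^{\uparrow}$ at the origin, I would first obtain
\begin{equation*}
U(x)=\int_{[0,\infty)}U^{\downarrow}(dy)\,U^{\uparrow}([y,y+x]).
\end{equation*}
Applying Theorem~\ref{M} to the renewal process $(H_n^{\uparrow})$ gives
\begin{equation*}
U^{\uparrow}(z)=(m^{\uparrow})^{-1}\left[z+\int_0^z\overline{\Phi^{\uparrow}}(w)\,dw+V^{\uparrow}(z)\right],
\end{equation*}
where $V^{\uparrow}$ satisfies the analogues of (\ref{m2})--(\ref{m4}) with $\overline{\Phi}$ replaced by $\overline{\Phi^{\uparrow}}$. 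Substituting into the integral for $U(x)$ and using $\int U^{\downarrow}(dy)=C=m^{\uparrow}/m$, the linear piece yields $x/m$, the tail-integral piece is precisely $\Psi(x)+K$ by the very definition of $\Psi$, and the residue is a $V^{\uparrow}$-increment integral against $U^{\downarrow}$. This produces the master identity
\begin{equation*}
m^{-1}\tilde V(x)=K+(m^{\uparrow})^{-1}\int_{[0,\infty)}U^{\downarrow}(dy)\,[V^{\uparrow}(y+x)-V^{\uparrow}(y)].
\end{equation*}

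The problem then reduces to extracting the asymptotics of the $V^{\uparrow}$-increment integral in each of the three regimes of Theorem~\ref{M}(ii), using the known asymptotics of $V^{\uparrow}$ together with $\overline{\Phi^{\uparrow}}\sim\overline{\Phi}$ from (\ref{17}). For $\beta\in(0,1/2)$ one has $K=0$ and $V^{\uparrow}\in RV(1-2\beta)$ diverges; uniform Potter bounds against the finite measure $U^{\downarrow}$ give $\int U^{\downarrow}(dy)\,V^{\uparrow}(y+x)\sim C V^{\uparrow}(x)$, so $\tilde V(x)\sim V^{\uparrow}(x)\sim V(x)$, yielding (\ref{m2}). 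For $\beta\in(1/2,1)$, $V^{\uparrow}(x)\to 0$, and the explicit value $K=\int U^{\downarrow}(dy)\,V^{\uparrow}(y)$ is chosen precisely so that the constant pieces cancel when dominated convergence is applied, leaving the regularly varying part $\sim V^{\uparrow}(x)/m\sim V(x)/m$. The two subcases $\beta=1/2$ are handled in the same spirit: when $\int\overline{\Phi}^{2}<\infty$, $V^{\uparrow}(z)$ converges to a finite limit and $K$ absorbs the resulting constant to produce (\ref{m4}); when $\int\overline{\Phi}^{2}=\infty$, $K=0$ and the $o(\cdot)$ bound for $V^{\uparrow}$ propagates through to give (\ref{m3}).

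The principal obstacle is justifying the dominated-convergence step used to evaluate the $V^{\uparrow}$-increment integral uniformly in $x$. One needs, in each regime, a dominating function of $y$ that is integrable against the finite measure $U^{\downarrow}$ and controls $V^{\uparrow}(y+x)-V^{\uparrow}(y)$ as $x\to\infty$. The asymptotic monotonicity of $V^{\uparrow}$ established in the proof of Theorem~\ref{M}(ii) via (\ref{6}), combined with standard Potter bounds for regularly varying functions, should supply such uniform control; verifying that the constant $K$ in the definition of $\Psi$ exactly offsets the boundary constant thrown off by the integral in each regime is where the delicate bookkeeping lies.
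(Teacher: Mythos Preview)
Your proposal is correct and follows essentially the same route as the paper: use the Wiener--Hopf identity (\ref{19}) to write $U(x)$ as an integral of $U^{\uparrow}(y+x)-U^{\uparrow}(y)$ against the finite measure $U^{\downarrow}$, substitute the expansion of $U^{\uparrow}$ coming from Theorem~\ref{M} applied to the ascending ladder process, isolate the $V^{\uparrow}$-increment integral, and then pass to the limit regime by regime via dominated convergence with Potter-type bounds. Your flagging of the bookkeeping around the constant $K$ is apt; the paper's proof glosses over exactly this point (and indeed its displayed identity $U(x)=x/m+\Psi(x)+I(x)/m^{\uparrow}$ tacitly drops the $K$-shift), so your more explicit master identity $m^{-1}\tilde V(x)=K+(m^{\uparrow})^{-1}\int U^{\downarrow}(dy)\,[V^{\uparrow}(y+x)-V^{\uparrow}(y)]$ is the right object to work with.
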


\begin{proof}
From (\ref{18}) we have%
\begin{equation*}
U(x)=\int_{0}^{\infty }U^{\downarrow \text{ }}(dy)\left( U^{\uparrow
}(y+x)-U^{\uparrow }(y)\right) ,
\end{equation*}%
so that if we substitute (\ref{15}) for $U^{\uparrow }$ we get%
\begin{eqnarray*}
U(x) &=&\frac{1}{m^{\uparrow }}\int_{0}^{\infty }U^{\downarrow \text{ }%
}(dy)\left( x+\int_{y}^{x+y}\overline{\Phi ^{\uparrow }}(z)dz+V^{\uparrow
}(x+y)-V^{\uparrow }(y)\right) \\
&=&\frac{Cx}{m^{\uparrow }}+\Psi (x)+\frac{1}{m^{\uparrow }}\int_{0}^{\infty
}U^{\downarrow \text{ }}(dy)(V^{\uparrow }(x+y)-V^{\uparrow }(y)) \\
&:&=\frac{x}{m}+\Psi (x)+\frac{I(x)}{m^{\uparrow }},
\end{eqnarray*}%
and we need to examine the behaviour of $I(x).$ Note that for $\beta >1/2$
we have $\int_{0}^{\infty }U^{\downarrow \text{ }}(dy)V^{\uparrow }(y)$
finite, and $\int_{0}^{\infty }U^{\downarrow \text{ }}(dy)V^{\uparrow
}(x+y)\backsim CV^{\uparrow }(x).$ For $\beta <1/2$ we have $V^{\uparrow
}(x)\rightarrow \infty $ and 
\begin{equation*}
\frac{V^{\uparrow }(x+y)-V^{\uparrow }(y)}{V^{\uparrow }(x)}\rightarrow 1,%
\text{ }
\end{equation*}%
and we can modify the argument in \cite{S2} to \ show that dominated
convergence applies to give the result. Similar arguments deal with the case 
$\beta =1/2.$\ 
\end{proof}

\section{Concluding remarks}

It is easy to see that in the renewal case we can expand $\int_{0}^{\infty
}e^{-\lambda x}U(x)dx$ in powers of $1-\hat{\phi}(\lambda )$ as follows:%
\begin{equation*}
\hat{U}(\lambda )=\frac{1}{\lambda }+\frac{1}{m\lambda ^{2}}\left(
1+\sum_{1}^{\infty }(1-\hat{\phi}(\lambda ))^{r}\right)
\end{equation*}%
Now 
\begin{eqnarray*}
\frac{(1-\hat{\phi}(\lambda ))}{m\lambda ^{2}} &=&m^{-1}\int_{0}^{\infty
}e^{-\lambda x}\int_{0}^{x}\overline{\Phi }(y)dy, \\
\frac{(1-\hat{\phi}(\lambda ))^{2}}{m\lambda ^{2}} &=&m^{-1}\int_{0}^{\infty
}e^{-\lambda x}\int_{0}^{x}\overline{G}(y)dy,
\end{eqnarray*}%
and in fact for any $r\geq 2$%
\begin{equation*}
\frac{(1-\hat{\phi}(\lambda ))^{r}}{m\lambda ^{2}}=m^{-1}\int_{0}^{\infty
}e^{-\lambda x}\int_{0}^{x}\overline{G_{r}}(y)dy,
\end{equation*}%
where $\overline{G_{r}}(y)=\int_{y}^{\infty }g_{r}(z)dz$ and the sequence of
functions $g_{r}$ are defined by 
\begin{equation*}
g_{2}=g=2\phi -\phi \ast \phi \text{ and }g_{r+1}=\phi +g_{r}-\phi \ast
g_{r},\text{ }r\geq 2.
\end{equation*}%
If one could justify inverting the transform, writing $\overline{G_{1}}$ for 
$\overline{\Phi },$ this would yield a complete asymptotic expansion 
\begin{equation*}
U(x)=1+\frac{x}{m}+\frac{1}{m}\sum_{1}^{\infty }\int_{0}^{x}\overline{G_{r}}%
(y)dy,
\end{equation*}%
and our results involve only the first two terms in the sum. The crux of our
result is the justification of the relation $\overline{G_{2}}(y)\backsim
c_{\alpha }\overline{\Phi }(x)^{2},$ so a natural question is whether one
can show that $\overline{G_{r}}(y)\backsim c\overline{\Phi }(x)^{r}.$ This
seems to be impossible without making extra assumptions, but it seems that
the not unnatural assumption that $F$ has a monotone density would permit
verification of this when $r=3.$ This would then give an extra term in our
result when $\beta <1/2.$

\end{document}